\newtheorem{theorem}{Theorem}
\newtheorem{question}[theorem]{Question}
\newtheorem{conjecture}[theorem]{Conjecture}
\begin{document}
\newcommand{\Addresses}{{
\bigskip
\footnotesize
\medskip

\noindent Maria-Romina~Ivan, \textsc{
Department of Pure Mathematics and Mathematical Statistics, Centre for Mathematical Sciences, Wilberforce Road, Cambridge, CB3 0WB, UK.}\\\nopagebreak\textit{Email address: }\texttt{mri25@cam.ac.uk}

\medskip

\noindent Imre~Leader, \textsc{Department of Pure Mathematics and Mathematical Statistics, Centre for Mathematical Sciences, Wilberforce Road, Cambridge, CB3 0WB, UK.}\\\nopagebreak\textit{Email address: }\texttt{i.leader@dpmms.cam.ac.uk}

\medskip

\noindent Mark~Walters, \textsc{School of Mathematical Sciences, Queen Mary University of London, London, E1 4NS, UK.}\\\nopagebreak\textit{Email address: }\texttt{m.walters@qmul.ac.uk}
\medskip}}
\pagestyle{fancy}
\fancyhf{}
\fancyhead [LE, RO] {\thepage}
\fancyhead [CE] {MARIA-ROMINA IVAN, IMRE LEADER AND MARK WALTERS}
\fancyhead [CO] {BLOCK SIZES IN THE BLOCK SETS CONJECTURE}
\renewcommand{\headrulewidth}{0pt}
\renewcommand{\l}{\rule{6em}{1pt}\ }
\title{\Large\textbf{BLOCK SIZES IN THE BLOCK SETS CONJECTURE}}
\author{MARIA-ROMINA IVAN, IMRE LEADER AND MARK WALTERS}
\date{}
\maketitle
\begin{abstract}A set $X$ is called Euclidean Ramsey if, for any $k$ and sufficiently large $n$, every $k$-colouring of $\mathbb{R}^n$ contains a monochromatic congruent copy of $X$. This notion was introduced by Erd\H{o}s, Graham, Montgomery, Rothschild, Spencer and Straus. They asked if a set is Ramsey if and only if it is spherical, meaning that it lies on the surface of a sphere. It is not too difficult to show that if a set is not spherical then it is not Euclidean Ramsey either, but the converse is very much open despite extensive research over the years.\par
On the other hand, the block sets conjecture is a purely combinatorial, Hales-Jewett type of statement, 
concerning `blocks in large products', 
introduced by Leader, Russell and Walters. If true, the block sets conjecture would imply that every
transitive set (a set whose symmetry group acts transitively) is Euclidean Ramsey. As for the question above,  the block sets conjecture remains very elusive, being known only in a few cases.\par
In this paper we show that the sizes of the blocks in the block sets conjecture cannot
be bounded, even for templates over the alphabet of size 3. We also show that for the first non-trivial template, namely $123$, the blocks may be taken to be of size $2$ (for any number of colours). This is best possible; all previous bounds were `tower-type' large.
\end{abstract}
\section{Introduction} 
We start with some background on Euclidean Ramsey theory and the block sets conjecture -- the reader who is already familiar with these may skip this part.

A finite set $X$ in some Euclidean space $\mathbb{R}^d$ is called \textit{Ramsey} if for any $k$ there
exists an $n$ such that whenever $\mathbb{R}^n$ is $k$-coloured there is a monochromatic copy (where 
`copy' means 
congruent copy) of $X$. The study of Ramsey sets was initiated by Erd\H{o}s, Graham, Montgomery, Rothschild, Spencer and Straus \cite{EGMRSS} . They gave examples of Ramsey sets, and proved that a non-spherical set cannot be Ramsey -- here a set is \textit{spherical} if it is a subset
of some sphere. They conjectured that this is the only obstruction to a set being Ramsey, in other
words that $X$ is Ramsey if and only if $X$ is spherical. 

Progress on this question has been very slow. Frankl and R\"odl \cite{FR} showed that every triangle is 
Ramsey, and then that every simplex is Ramsey \cite{FR2}. K\v{r}\'i\v{z} \cite{K} showed that every regular $n$-gon is Ramsey, and 
in fact he showed some more general versions of this statement, which imply for example that every
Platonic solid is Ramsey.

A rival conjecture was formulated by Leader, Russell and Walters in \cite{LRW}. They conjectured that a
set is Ramsey if and only if it is a subset of some finite transitive set (perhaps living in more
dimensions), where a set $X$ is \textit{transitive} if its symmetry group acts transitively on $X$.
It is easy to see that every transitive set must be spherical, but it turns out (although this is
not obvious -- see \cite{LRW} and \cite{LRW2}) that there exist spherical sets that do not embed into any transitive
set. Thus the two conjectures are fundamentally different.

Now, either conjecture would, if true, imply that all transitive
sets are Ramsey. This makes transitive sets a key notion in the area of Euclidean Ramsey theory. The block sets
conjecture, which first appears in \cite{LRW}, is a purely combinatorial statement which implies that every transitive set is
Ramsey. In fact, it is equivalent to a mild strengthening of 
this latter statement: it is equivalent to the assertion that, for
any transitive set $X$, not only is $X$ Ramsey but in fact for any $k$ there is some power $X^n$
such that whenever $X^n$ is $k$-coloured there is a monochromatic
copy of $X$ blown up by a fixed factor $\alpha$ -- in other words, for any $k$ there exist $n$ and $\alpha$ such that $(1/\alpha) X^n$ is `$k$-Ramsey for $X$'.

Before giving a precise definition, let us look at an informal example of a block set. Consider the word $11223$, viewed
as a finite string on the alphabet $[3]=\{ 1,2,3 \}$. Inside the product set $[3]^n$, let us fix five
disjoint sets of coordinates, each of the same size: say $I_1,\dots,I_5$ in $[n]$. We also fix values $a_i$
in $[3]$ for each $i \in [n]$ that does not belong to any $I_j$. And now we form a word $w$ in $[3]^n$ by taking $w_i$ to be $a_i$ for each $i$ not in any $I_j$, and making $w$ constant on each $I_j$, with
these five constant values being $1,1,2,2,3$ in some order. The resulting $30$ points are a `block set with
template $11223$ and  block size $|I_1|$', or simply a `copy' of $11223$.

More formally, we define a \textit{template} over $[m]$ to be a non-decreasing word $T\in[m]^l$ for some $l$. Given a template $T$, we define what we mean by a block set with template $T$. 
Let $S$ be the set of all words on alphabet $[m]$ of length $l$ that are permutations of the
word $T$.  
A \textit{block set} with template $T$ in $[m]^n$ is a set $B$ of words on alphabet $[m]$ of length $n$, formed in the following way. First, select pairwise disjoint subsets $I_i,\dots I_l\subset[n]$ all of the same size $d$ say, and elements $a_i$ for each $i\notin\bigcup_j I_j$. In other words, we select the `block' positions for all letters of $T$, counted with multiplicity, as well as the fixed `reference' word outside them.\par
And now a word $w$ is in $B$ if and only if $w_i=a_i$ for all $i\notin\cup_j I_j$ and there exists $v\in S$ such that $w_i=v_j$ if $i\in I_j$. We say that this set $B$ is a block set with \textit{block size} or simply \textit{degree} $d$.

We are now ready to state the block sets conjecture.
\begin{conjecture}[\cite{LRW}]
Let $m$ and $k$ be positive integers and let $T$ be a template over $[m]$. Then there exist positive integers $n$ and $d$ such that whenever $[m]^n$ is $k$-coloured there exist a monochromatic block set of degree $d$ with template $T$.
\label{mainconj}
\end{conjecture}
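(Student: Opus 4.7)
My plan would be to proceed by induction on a suitable complexity measure of the template~$T$, most naturally its length $l$ (or, more refined, on the lexicographic sequence of multiplicities of the distinct letters in $T$). The base cases are the constant templates $T = a^l$, where the set $S$ of permutations of $T$ has a single element and so a block set degenerates to a single word: the statement is then equivalent to the Hales--Jewett theorem applied to $[m]^n$ after identifying the $l$ blocks with a single ``combinatorial coordinate'', and we may take $d = 1$ with $n$ the relevant Hales--Jewett number. The mildly more interesting base case $T=12$ over $[2]$ already asks for a monochromatic pair of words differing by swapping two equal-sized blocks, and should succumb to a direct Gallai-style argument applied inside a Hales--Jewett cube.

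For the inductive step, the natural strategy is to peel off one letter-occurrence from $T$. Write $T = T' c$ with $|T'| = l - 1$; by the inductive hypothesis there exist $n'$ and $d'$ such that any $k$-colouring of $[m]^{n'}$ contains a monochromatic block set of template $T'$ and degree $d'$. Working inside a product space $[m]^{n' \times N}$ and viewing a $k$-colouring of it as an $m^{n'}$-colouring of $[N]$ via its slices, one would apply Hales--Jewett to the second factor, with colour space of size $k^{m^{n'}}$ say, to find a monochromatic combinatorial line whose active coordinate set can be partitioned into $l$ equal pieces aligned with the $l$ blocks demanded by $T$. Setting the new block of that line to the value $c$ would then adjoin a block of the correct size to the inductively obtained $T'$-configuration, producing, one hopes, a monochromatic $T$-block set in $[m]^{n' N}$.

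The main obstacle is exactly what has kept the conjecture open for over a decade: matching the degree $d'$ produced by induction with the degree of the newly adjoined block, and simultaneously ensuring that \emph{all} $|S|$ permutations of $T$ -- not merely a convenient subset -- appear in the same colour class. A naive execution of the sketch above produces block sets of mismatched degrees, or realises only the identity plus a single swap inside $S$. A working proof would presumably require a multi-layered pigeonhole in the spirit of Shelah's proof of Hales--Jewett, or a partite regularity partitioning, to synchronise all $|S|$ orbit points on equal-sized blocks at once. Moreover, any of this paper's stated results showing that $d$ cannot be bounded in terms of the template alone would force any successful proof to genuinely trade $n$ off against $d$, ruling out the simplest Hales--Jewett-style reductions and suggesting that the right induction is likely on a more subtle invariant of $T$ than its length.
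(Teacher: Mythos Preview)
The statement you are attempting to prove is labelled in the paper as a \emph{conjecture}, not a theorem: the paper does not contain a proof of it, and indeed explicitly says that it ``remains very elusive, being known only in a few cases''. So there is no paper proof to compare your proposal against. What the paper actually establishes are (i) the special case $T=123$ with block size~$2$ (Theorem~\ref{123}), and (ii) the negative result that no fixed bound on~$d$ works uniformly over all templates on $[3]$ (Theorem~\ref{unbounded}). The general conjecture is open even for $T=112233$ and for $T=12345$.

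To your sketch itself: you have correctly diagnosed why the naive induction on $|T|$ fails --- the degree of the inductively obtained $T'$-block set need not match the size of the freshly adjoined block, and the argument only realises a proper subset of the permutations in $S$. This is precisely the obstruction, and no known refinement (Shelah-style or otherwise) closes the gap. A couple of smaller points: your base case for constant templates $T=a^l$ is actually trivial and does not need Hales--Jewett, since then $|S|=1$ and a block set is a single word, automatically monochromatic. And the case $T=12$ over $[2]$ (indeed any template over $[2]$) is handled in the paper by a direct application of Ramsey's theorem with $d=1$, not a Gallai/Hales--Jewett argument. Finally, your closing observation that Theorem~\ref{unbounded} forces any successful proof to trade $n$ against $d$ is exactly right, and is one of the morals the paper draws.
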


Now, for alphabet of size 2, it is easy to
see that this is true for any template. Indeed, this follows
from Ramsey's theorem, and moreover one may take the blocks to
have size 1. To see this, suppose our template has $p$ 1's and $q$ 2's. Given a $k$-colouring of  all the
words in $[2]^n$, let us consider only those words with exactly $q$ 2's. This corresponds to a $k$-colouring of $[n]^{(q)}$, the family of all the $q$-sets in $[n]$. So, by Ramsey's theorem, there is a set $S$ of size $p+q$ all of whose $q$-set
have the same colour, and this corresponds to a block set with the given template -- the $p+q$ blocks are
each of size 1 and are precisely the points of $S$.

In \cite{LRW}, Conjecture \ref{mainconj} is proved for the first
non-trivial template, namely $123$, and more generally for templates
of the form $1\underbrace{22222}_{p}\underbrace{33333}_{q}$ for any positive integers $p$ and $q$. But the conjecture is not known for all templates on alphabet of size 3: the case $112233$ is a
fascinating open problem. We mention in passing that the conjecture is true for the template $1234$ (this follows from 
K\v{r}\'i\v{z}'s result mentioned above), but for $12345$ it is unknown. 

What about the block sizes? It is known (see \cite{LRW}) that,
for the template $123$, block size 1 does not suffice. The proof in \cite{LRW} of the block sets conjecture for template $123$ does not even
have any fixed bound on the block size: the block sizes used
increase as the number of colours $k$ increases. There is also a proof for
template $123$ that may be read out of K\v{r}\'i\v{z}'s result \cite{K} (via the equivalent formulation of the
block sets conjecture mentioned above), and this gives a block
size that is fixed (as the number of colours vary) but is
extremely large (a long tower of integers). In the other
direction, no examples were known of templates for which block
size even 2 does not suffice: the only negative statement was the
fact that block size 1 does not work for $123$ (or any other
template using all symbols $1,2,3$).

One of our aims in this paper is to show that in fact there is no 
upper bound on the block sizes needed. We show that for
every $d$ there exists a template $T$ over $[3]$, and a number of colours $k$, such that whenever we $k$-colour $[3]^n$ (for any $n$) there is no monochromatic block set with template $T$ and degree at most $d$. We stress that the templates we use in this are
all ones for which the block sets conjecture \textit{is} known to be true. 

Our other aim is to show that, for the first non-trivial template,
$123$, in fact blocks of size 2 suffice. This is best possible, in view of the remarks above. Note that this is for any number of
colours. This perhaps might suggest that, for any template, the block size $d$ may be chosen independently of the number of
colours $k$.

The plan of the paper is as follows. In Section 2 we show that block sizes are not bounded, and in Section 3 we prove that
block size 2 suffices for the template $12$3. In Section 4 we
give some open problems. These include a tantalising problem about
Ramsey sets in the $l_1$ norm, which seems to be closely linked 
to some ideas about proving the block sets conjecture.

\section{Unbounded block sizes}

In this section we show that there is no universal bound on the block sizes needed in the block
sets conjecture. In other words, for each $d$ we give a template $T$ and a number of colours $k$ for
which the block size cannot be at most $d$. We mention again that in all cases the templates we consider
\textit{will} be ones that we know do satisfy the block sets conjecture (all are cases that are
proved in \cite{LRW}). In fact, they will all be templates on alphabet $[3]$.

The rough idea of our proof is as follows. We will encode (via some parts of the colouring) information
not only about the `profile' of a word $x$, meaning how many of each symbol occur in $x$, but also about how many times we have (say) a 3 before a 1 
(not necessarily adjacent, but respecting the linear order of the coordinates). In fact, we also
ask about the actual position (modulo some large fixed number) of the 1 in such a pair, so that we
have several separate `contributions'. We make this precise below.

\begin{theorem}
\label{unbounded}
Let $d$ be a fixed positive integer. Then there exists a template $T$ on alphabet~$[3]$, and a colouring of all the finite length words on alphabet $[3]$, such that, for any positive integer $n$, $[3]^n$ contains no monochromatic copy of $T$ whose blocks all have size at most $d$. 
\end{theorem}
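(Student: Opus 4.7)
For given $d$, I would take a template $T = 1 \cdot 2^{p} \cdot 3$ on alphabet $[3]$ with $p = p(d)$ to be chosen large, so that $T$ has $l := p+2$ blocks per copy; this family is covered by Conjecture~\ref{mainconj} via~\cite{LRW}. Fix also integers $M, k$, with $p$ to be chosen last and largest. For a finite word $x \in [3]^*$, let $N_s(x)$ count occurrences of $s$, and for each $a \in \mathbb{Z}/M\mathbb{Z}$ set
\[
 e_a(x) := \#\bigl\{(i,j) : i < j,\ x_i = 3,\ x_j = 1,\ j \equiv a \pmod{M}\bigr\}.
\]
Colour $x$ by the tuple
\[
 c(x) = \Bigl((N_s(x))_{s \in \{1,2,3\}},\ (e_a(x))_{a \in \mathbb{Z}/M\mathbb{Z}}\Bigr) \pmod{k},
\]
which is a finite colouring (with $k^{M+3}$ colours). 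Since any two permutations of $T$ have the same profile, the profile coordinates of $c$ are automatically constant on any block set; the essential constraint comes from the vector $(e_a)_a$.

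Suppose for contradiction that blocks $I_1,\dots,I_l$ of common size $d' \le d$ together with some reference give a monochromatic block set. A permutation $\phi$ is determined by the ordered pair $(X_3, X_1)$ of distinct blocks receiving the unique $3$ and the unique $1$ (all other blocks and the reference take value $2$). Writing $h_Y(j) := |Y \cap [1,j-1]|$ and splitting $e_a(w^\phi)$ according to whether each of $i, j$ lies in a block or in the reference gives
\[
 e_a(w^\phi) = \sum_{\substack{j \in X_1 \\ j \equiv a \,(\mathrm{mod}\,M)}} h_{X_3}(j) \;+\; \alpha_a(X_3) \;+\; \beta_a(X_1) \;+\; C_a,
\]
where $\alpha_a$ depends only on $X_3$ (and the reference), $\beta_a$ depends only on $X_1$ (and the reference), and $C_a$ is constant in $\phi$. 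Taking the alternating sum of $e_a$ over the four permutations $(Y_1, X), (Y_1, X'), (Y_2, X), (Y_2, X')$ for pairwise distinct blocks $Y_1, Y_2, X, X'$ (which requires $l \ge 4$, so $p \ge 2$) cancels $\alpha_a$, $\beta_a$, $C_a$; monochromaticity then yields the key identity
\[
 \sum_{\substack{j \in X \\ j \equiv a \,(\mathrm{mod}\,M)}} \bigl(h_{Y_1}(j) - h_{Y_2}(j)\bigr) \;\equiv\; \sum_{\substack{j \in X' \\ j \equiv a \,(\mathrm{mod}\,M)}} \bigl(h_{Y_1}(j) - h_{Y_2}(j)\bigr) \pmod{k}
\]
for every such quadruple of distinct blocks and every residue $a$.

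The core combinatorial step is extremal: fix $(Y_1, Y_2)$ and observe that the integer-valued step function $g_{Y_1 Y_2} := h_{Y_1} - h_{Y_2}$ is supported in the region determined by the interleaving of $Y_1$ and $Y_2$, with small integer values (bounded by $d'$ in absolute value); the identity above then forces the other $l - 2$ blocks to be tightly aligned with this region modulo $M$ and $k$. Iterating over all $\binom{l}{2}$ pairs $(Y_1, Y_2)$ produces a highly overdetermined linear system on the block positions, which one can check is incompatible when $d' = 1$ already; for general $d' \le d$ it is infeasible once $l$ is large compared to a polynomial in $d, M, k$. The main obstacle is configurations in which positions \emph{within} a single block collide modulo $M$ (possible once the block's internal span is at least $M$, which can occur for large $n$), since these allow non-trivial cancellations in the sums. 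The resolution is to take $p$ so large that pigeonhole on the finitely many residue multisets of size $\le d$ in $\mathbb{Z}/M\mathbb{Z}$ produces a sub-family of many blocks with identical residue patterns; within this sub-family the identities collapse to a rigid linear system over $\mathbb{Z}/k\mathbb{Z}$ which has no nontrivial solution once $k$ exceeds an explicit polynomial in $d$. Quantitatively, I would pick $M \gg d$, then $k \gg d^2$, and finally $p$ much larger than any combinatorial function of $d, M, k$ arising above.
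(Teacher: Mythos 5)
Your reduction to the alternating-sum identities is carried out correctly, but the proposal fails at exactly the point you leave unproved (``one can check is incompatible'', ``is infeasible'', ``has no nontrivial solution''): for $d'\ge 2$ that claim is false, and in fact your colouring does not prove the theorem for any $d\ge 2$. Concretely, take $l=p+2$ blocks of size $2$, nested palindromically and all placed in one residue class mod $M$: say $I_i=\{iM,\,(2l+1-i)M\}$ for $1\le i\le l$, with the reference word identically $2$. For your template $1\,2^{p}\,3$, a word of the block set has two $3$'s in some block $I_y$ and two $1$'s in some block $I_x$, $x\ne y$, and by nestedness the number of (3 before 1) pairs is $[y<x]+1+[y>x]=2$ for every choice of $x,y$, while every $1$ sits at a position $\equiv 0 \pmod M$. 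Hence $e_a=2\cdot[a\equiv 0]$ for every word of the block set, the profile coordinates are constant anyway, and the whole block set is monochromatic under $c$ --- with blocks of size $2\le d$. Equivalently, in this nested configuration with the two elements of each block congruent mod $M$, the quantities $\sum_{j\in X,\,j\equiv a}\bigl(h_{Y_1}(j)-h_{Y_2}(j)\bigr)$ vanish identically (the contributions at $s_x$ and $t_x$ are $\pm1$ in the same residue class and cancel), so your identity system is genuinely satisfiable once $d'\ge 2$; your verification at $d'=1$ does not propagate. The deeper reason is that counting $3$-before-$1$ pairs, even refined by the absolute position of the $1$ modulo $M$, is invariant under this palindromic exchange --- the very reversal trick the paper uses \emph{positively} in Theorem \ref{123} to get block size $2$ for the template $123$.

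The paper's proof avoids this trap by making the colouring depend on order statistics of the $1$'s and $2$'s rather than on absolute positions or on $3$--$1$ pair counts: the contribution of each $1$ is the basis vector indexed by the number of $1$'s and $2$'s preceding it, taken mod $d^2+1$, summed in $\mathbb{Z}_{d+1}^{\,d^2+1}$. This index cannot be rigged by placing block positions in chosen residue classes, because within the $d+1$ relevant blocks (isolated by pigeonhole, which is what the $d^3$ copies of $3$ in the template are for) the index of a block's first coordinate can shift by at most $d^2<d^2+1$, so the indices of distinct blocks are forced to be distinct; comparing the words obtained by moving the single $1$ between these blocks then forces one block to carry at least $d+1$ contributions, a contradiction with block size at most $d$. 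So beyond the missing extremal argument, the strategy itself needs to be changed: any colouring built from $3$-before-$1$ pair counts and positions mod $M$ is defeated by nested blocks in a single residue class.
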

\begin{proof}Let $T$ be $1\underbrace{2\dots2}_{d} \underbrace{3\dots3}_{d^3}$. In other words, $T$ consists of a single 1, $d$ copies of 2, and $d^3$ copies of 3. The size of $T$ is $s=1+d(d^2+1)$.
\par For the colouring, we are going to assign (as a colour) to every finite word on $[3]$ a vector of length $d^2+1$ with coordinates in $\mathbb{Z}_{d+1}$ -- in other words, a point in $\mathbb{Z}_{m}^{l}$, where $m=d+1$ and $l=d^2+1$. The additive group notation comes from the fact that a colour will be the sum of certain `contributions'.
\par To start off, let $x$ be a finite word on $[3]$. For each $i$ with $x_i=1$, we define $a_i$ to be the number of 1's and 2's that occur before $i$ when counted from the left, taken modulo $d^2+1$. In symbols, \[a_i=|\{j<i:x_j=1\text{ or }x_j=2\}|\mod d^2+1.\]
\par We define the \textit{contribution} of $i$ to be the basis vector $e_{a_i}$ in $\mathbb{Z}_m^l$. If $x_i\neq1$, then the contribution of $i$ is 0. Finally, we define the colour $c$ of $x$ to be the sum of all of these contributions: in other words, $\sum_{\{i:x_i=1\}}e_{a_i}$. 
\par We will now show that for this colouring there is no monochromatic copy of $T$ with block sizes at most $d$.
\par Suppose for a contradiction that there exists a positive integer $n$, and blocks $B_1,B_2,B_3,\dots,B_s$ all of size at most $d-1$, and $y\in [3]^{[n]\setminus \bigcup_i B_i}$, that yield a monochromatic copy of $T$.
\par By relabelling if necessary, we may assume that, for $i<j$, the first coordinate of $B_i$ is less than the first coordinate of $B_j$. For each $i$, let $b_i$ be the number of 1's plus the number of 2's that occur in $y$ (i.e. not counting any points in the blocks) before the first coordinate of $B_i$, counted modulo $d^2+1$. Since we have $s=1+d(d^2+1)$ blocks and $d^2+1$ possible values for the $b_i$, by the 
pigeonhole principle we must have $d+1$ blocks for which the $b_i$ have the same value, say $b$.
\par Let $B_{i_1},B_{i_2},\dots,B_{i_{d+1}}$ be the corresponding blocks, where $i_1<i_2<\dots<i_{d+1}$. For clarity, let $C_j=B_{i_j}$ for all $1\leq j\leq d+1$. Since we have exactly $d+1$ such blocks, which is the number of 1's and 2's in our chosen template $T$, from now on we will only substitute the single 1 and the $d$ 2's in all the $C_i$, thus forcing all the other blocks to be 3's. 
\par This gives us $d+1$ words $w_1,w_2,\dots,w_{d+1}$, given by which $C_i$ we insert the 1 into. We will derive a contradiction by showing that these $d+1$ words do not, in fact, all have the same colour.

\par We first observe that the colour of one of these words is the sum of the contributions of 
coordinates $\bigcup_i C_i$, together with the contributions from all other 1's, which cannot lie in any $B_i$ (so they are `inactive' or fixed coordinates).
\par Note that the sum of contributions of the `inactive' coordinates is constant since during our substitutions we never replace a 3 with a 1 or 2, or vice versa. Therefore, since we are assuming that the sum of all contributions is constant, the sum of the contributions from the coordinates of $\bigcup_i C_i$ has to be constant too, say $v\in \mathbb{Z}_m^l$.

\par For each $1\leq i\leq d+1$ we denote by $p_i$ the number of 1's and 2's before the first coordinate of $C_i$ in any of the $w_j$ (since the $w_j$ only differ by some 1's changing to 2's and vice versa this is independent of which $w_j$ we choose). By construction, the $C_i$'s have the same number, say $b$, of 1's and 2's mod $d^2+1$ before them, when looking at $y$ only (so not considering any of the blocks). This means that before $C_1$ we have $p_1 \equiv b\mod{d^2+1}$ 1's and 2's.

\par Next, $p_i$ is $b+\lambda_i\mod d^2+1$, where $\lambda_i$ is the number of points of $\bigcup_j C_j$ that occur before the first point of $C_i$.
\par Since all blocks have size at most $d-1$, we get that $\lambda_i\leq (i-1)d\leq d^2$, and so all the $p_i$ are distinct $\mod d^2+1$. Moreover, since the first coordinate of $C_i$ occurs before the first coordinate of $C_{i+1}$ we see that $\lambda_{i+1}\geq \lambda_i+1$. 
\par Finally, observe that that putting 1's in the block $C_i$ contributes to the $p_i\mod{d^2+1}$ coordinate of the colouring. Therefore, by moving the 1 from $C_1$ to $C_i$ adds the contribution $e_{p_i}$ and subtracts the contribution $e_{p_1}$. However, the colour $v$ has to be the same, and thus $e_{p_i}$ has to be the contribution of one of the 1's that were previously in $C_1$. (This is because the other
contributions from this block $C_i$ cannot cancel out the $e_{p_i}$, as $|C_i|\le d$.) Since this is true for any $2\leq i\leq d+1$ and all $p_i$ are distinct $\mod d^2+1$, $C_1$ must produce at least $d+1$ contributions, thus $|C_1|\ge d+1$, a contradiction.
\end{proof}

We remark that the above proof is unchanged if one wishes for a more general template $T=1\underbrace{22\dots2}_{p}\underbrace{33\dots3}_{q}$. The length of $T$ is $s=1+p+q$. The colouring is the same -- associate to every word over $[3]$ a point in $\mathbb{Z}_m^l$ as above. Provided $s$ is at least $pl+1$, we still get $p+1$ blocks with the same value for $b_i$. By taking $l>pd$, we ensure that the $\lambda_i$ are still disjoint, and since $m>d$ we still produce at least $d+1$ contributions. Hence, if $s\geq pl+1$, $pd<l$ and $m>d$, the above proof goes through identically.  So what we obtain is that
a template with one 1, $p$ 2's and $q$ 3's does need block 
sizes greater than  $d$ whenever $q\geq p^2d$. 

If one considers the natural templates $123\dots m$, it is
easy to check that if the block sets conjecture holds for
one such template $123\dots m$ then it also holds for all
templates of length $m$. This is informally because identifying
symbols `maps block sets to block sets'. Hence it also follows from Theorem \ref{unbounded} that for any $d$
there is a template of the form $123\dots m$ for which one
cannot guarantee that one can take blocks of size at most $d$.

We mention that, for Euclidean Ramsey sets, the result above has the following consequence. For any
$d$, there exists a transitive set $X$ that is Ramsey, and a number of colours $k$, such that whenever
the set $(1/\alpha) X^n$ is a $k$-Ramsey set for $X$ we must have $\alpha \geq d$. This may be read
out of the equivalent form of the block sets conjecture in terms of product sets mentioned above.

\section{The template $123$}

As we have seen, the block sets conjecture is essentially trivial for templates over the alphabet of size 2,
and even with blocks of size 1. So we now turn our attention to the first non-trivial template, namely $123$.
Here it was known that the blocks cannot be taken to be of size 1, but the block sizes that were known
to work were huge. Indeed, in \cite{K} the block size is a tower, coming from an iterated application
of some Ramsey theorems. And in \cite{LRW} there is no iteration, but the proof uses van der Waerden's theorem
for $k$ colours, so that the block size is not even constant but grows with the number of colours.

Our aim is to show that block size 2 suffices. This is best possible, in view of the remarks above.
In fact, we show that a fixed pattern suffices: one can always find a block set whose \textit{pattern}
(the way that the set of $2\cdot3$ coordinates is partitioned into the 3 blocks) is fixed, namely
$ABCCBA$. (Here $A,B,C$ represent the relative positions of the 2-sets $I_1,I_2,I_3$ in the fixed linear
order on $[n]$.) The key idea is in fact this palindromic aspect (the use of reversals). In contrast, the
earlier proofs used patterns like $ABCABCABC\dots ABC$, or iterated versions of that, and as we will see
(see the discussion after the proof) there are reasons why this makes the proof much harder.

\begin{theorem} Let $T$ be the template $123$, and let $k$ be a positive integer. Then there exists $n$ such that whenever $[3]^n$ is $k$-coloured there exists a monochromatic copy of $T$ with pattern $ABCCBA$.
\label{123}
\end{theorem}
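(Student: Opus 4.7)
The plan is to exploit the reversal symmetry of the pattern $ABCCBA$ by seeking an even stronger configuration: a block set that is palindromic around the midpoint of $[n]$. Setting $n=2N$, I would pair position $i\in[N]$ with position $2N+1-i$, and encode each $w\in[3]^{2N}$ as a word $\tilde w\in\Sigma^N$, where $\Sigma=[3]^2$ is the alphabet of ordered pairs. Under this pair encoding, a palindromic-around-midpoint block set for template $123$ with pattern $ABCCBA$ corresponds to three positions $p_1<p_2<p_3$ in $[N]$ and a reference $y\in\Sigma^{N-3}$ such that the six words obtained by placing the diagonal letters $(\sigma(1),\sigma(1))$, $(\sigma(2),\sigma(2))$, $(\sigma(3),\sigma(3))$ at $p_1,p_2,p_3$ (one for each $\sigma\in S_3$) all receive the same colour under the induced $k$-colouring of $\Sigma^N$. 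Finding this configuration is stronger than what the theorem asks for and would suffice.

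With this reformulation in hand, I would attempt an iterated Hales--Jewett or Graham--Rothschild style argument on $\Sigma^N$: first locate a high-dimensional monochromatic combinatorial subspace, and then refine inside it to isolate three singleton pair-positions supporting the required diagonal permutation structure. A useful feature of the pair-encoding is that the six off-diagonal letters of $\Sigma$ form a rich reservoir of possible reference values, giving much more flexibility than is available in the basic alphabet $[3]$.

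The main obstacle is that the configuration sought is essentially a block-size-$1$ version of template $123$ over $\Sigma$, and block size $1$ is already known to fail for $123$ over $[3]$. The hope is that the six extra off-diagonal letters of $\Sigma$, combined with the nested (rather than interleaved) arrangement of the three pairs in $[n]$, is precisely what makes the argument close -- this nesting is the concrete manifestation of the palindromic aspect highlighted in the introduction. Patterns such as $ABCABC\cdots ABC$ do not admit a reduction of this type, which presumably explains why the earlier proofs yielded only tower-type block sizes. Managing the passage from a monochromatic combinatorial subspace in $\Sigma^N$ to the required three-singleton palindromic configuration, and using the non-diagonal reference letters to defeat the obstruction that rules out block size $1$ in the plain setting, is where the real combinatorial work should lie.
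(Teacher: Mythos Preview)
Your reduction to midpoint-symmetric blocks is too strong, and the strengthening is genuinely false. Take any $k$-colouring $c$ of $[3]^N$ (for all $N$) admitting no monochromatic block-size-$1$ copy of $123$; such colourings exist, as you yourself recall. Now colour $[3]^{2N}$ by $c'(w_1\ldots w_{2N})=c(w_1\ldots w_N)$, i.e.\ ignore the second half entirely. Under your pair encoding this is exactly the colouring of $\Sigma^N$ obtained by projecting each letter of $\Sigma=[3]^2$ onto its first coordinate. A midpoint-symmetric $ABCCBA$ block set has its three left endpoints $p_1<p_2<p_3$ in $[N]$ and the six words, restricted to the first $N$ coordinates, are precisely the six permutations of $1,2,3$ at $p_1,p_2,p_3$ with a fixed reference. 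Hence a monochromatic such block set under $c'$ would yield a monochromatic block-size-$1$ copy of $123$ under $c$, a contradiction. In other words, the off-diagonal letters of $\Sigma$ buy you nothing: the adversary can simply colour by the first coordinate of each pair and recover the original $[3]$-obstruction verbatim. No amount of Hales--Jewett or Graham--Rothschild machinery on $\Sigma^N$ can rescue this, because the target configuration itself is unattainable.

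The paper's proof uses a \emph{local} palindrome rather than a global one: the six active coordinates are nested as $ABCCBA$ but sit entirely inside a short initial segment of $[n]$, with no symmetry about the midpoint of $[n]$. Concretely, one first applies Ramsey's theorem to $(2k+2)$-subsets of $[n]$ to find $2k+4$ coordinates on which the positions of the $3$'s are irrelevant (only the relative order of the $1$'s and $2$'s matters). One then writes down $k+1$ specific $12$-patterns of length $2k+2$, each obtained from $1212\cdots 12$ by flipping a single $12$ block to $21$; pigeonhole gives two of these, say the $i$th and $j$th, with the same colour. The six-point set $\{2i-1,2i,2i+1,2j,2j+1,2j+2\}$, partitioned as $\{2i-1,2j+2\},\{2i,2j+1\},\{2i+1,2j\}$, is then the desired $ABCCBA$ block set, with the freedom to move the $3$'s supplying the remaining four permutations. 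The palindromic nesting here arises from the interaction of two separated flipped blocks, not from any global reflection of $[n]$.
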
 
\begin{proof} 
For simplicity and readability, we will just think of $n$ as being `very large': the actual value of $n$ will be fixed during the proof.
\par Let $\theta:[3]^n\rightarrow [k]$ be a fixed $k$-colouring of $[3]^n$. We must show that we can find $n-6$ fixed coordinates and 6 active ones, labelled $ABCCBA$ in this order, such that whenever we substitute $1,2,3$ (each corresponding to exactly one of $A$, $B$ and $C$), all 6 resulting words have the same colour.
\par Our first step is to ensure informally that, by passing to a subset of the coordinates, `the positions of the
3's do not matter'. This is a fairly standard kind of step to take in such a Ramsey argument.

Let $\mathcal A$ be the set of words of length $n$ (on alphabet $[3])$ that contain exactly $2k+2$ 2's and no 1's. For example, if $k=2$ and $n=10$, such a word could be $2322333222$.
\par Let also $\chi$ be the set of words of length $2k+2$ on alphabet $[2]$ that have exactly $k+1$ 1's and $k+1$ 2's. For example, if $k=2$, then words such as $111222$, $121212$, $1221211$ are all in $\chi$. Clearly, the size of $\chi$ is $s=\binom{2k+2}{k+1}$, and so let $\chi =\{w_1, w_2, \dots, w_s\}$.
\par Given a word $x\in\mathcal A$ and a word $w\in\chi$, we can generate a unique word in $[3]^n$ by replacing the 2's in $x$ with the word $w$ -- in other words, at the positions of the 2's in $x$ we insert, in order, the letters of $w$. Call this word $f(x,w)\in[3]^n$. As an  
example, if $k=2$, $n=10$, $x=2322333222$ and $w=121212$, then $f(x,w)=1321333212$.
\par It is clear that $f$ is a bijection between $\mathcal A\times\chi$ and the set of all the words of length $n$ with exactly $k+1$ 1's and $k+1$ 2's. This allows us to induce a colouring $\mathcal A$ with $k^s$ colours as follows: $\Theta:\mathcal A\rightarrow [k]^s$ is given by $\Theta(x)=(\theta(f(x, w_1)),\theta(f(x,w_2)),\dots,\theta(f(x,w_s))).$
\par Next, a word $w$ in $\mathcal A$ can be seen as a subset of $[n]$ of size $2k+2$, by selecting the coordinates at which the 2's are. So we may also view $\Theta$ as a $k^s$-colouring of $[n]^{(2k+2)}$.

\par By Ramsey's theorem, there exists $n$ such that whenever $[n]^{(2k+2)}$ is $k^s$-coloured there exists a set $S$ of size $2k+4$ such that $S^{(2k+2)}$ is monochromatic. (This is the actual version of `$n$ is
very large' that we need.) 
\par Applying this to our colouring $\Theta$, and assuming for clarity that $S=[2k+4]$, we get that no matter how we insert $2k+2$ 2's and 2 3's in the first $2k+4$ coordinates, we get the same colour under $\Theta$. This means that for any $w\in\chi$, if $x_1, x_2\in\mathcal A$ have their 2's in the first $2k+4$ coordinates, then $\theta(f(x_1,w))=\theta(f(x_2,w))$. In other words, no matter how we insert the 2 3's in the first $2k+4$ coordinates, the colour does not change, as long as the $12$ word is the same, namely $w$. In other words, informally, `the positions of the 3's do not matter'.
\par Now  consider the following $k+1$ words in $\chi$:\\
\begin{align*}
z_1&=211212\dots12\\z_2&=122112\dots12\\&\qquad\vdots\\z_{k+1}&=121212\dots21   \end{align*}
In words, $z_i$ has 1's at precisely positions $1,3,\dots, 2i-3, 2i,2i+1\dots2k+1$ -- or, simply put, it is a concatenation of $k+1$ `12' blocks, with the $i^{\text{th}}$ block flipped to a `21'.
\par Let $x\in\mathcal A$ be the word that starts with $2k+2$ consecutive 2's, the rest being all 3's. Consider the words $f(x,z_i)$ for $1\leq i\leq k+1$. Since we have only $k$ colours, by the pigeonhole principle there exists $i<j$ such that the colour of $f(x,z_i)$ is the same as the colour of $f(x,z_j)$. However, by the above choice of $S$, this colour does not change no matter how we reorder the first $2k+4$ coordinates of $x$.
\par To conclude the proof, we take the blocks to be $\{2i-1,2j+2\}$, $\{2i, 2j+1\}$ and $\{2i+1, 2j\}$, and the fixed coordinates are the $k-1$ remaining `12' blocks in the first $2k+4$ coordinates and all 3's in rest. This gives a monochromatic copy of $123$ with pattern $ABCCBA$, as required.

\par To help the reader visualise the above construction, let us look at a concrete example. Suppose $k=3$ and $f(x,z_1)$ has the same colour as $f(x, z_2)$. In other words the following receive the same colour, say $a$:

\begin{align*} f(x,z_1)=\quad&\,2112121233\dots\\f(x, z_2)=\quad&\underbrace{1221121233}_{\mathclap{2k+4=10\text{ coordinates}}}\dots\end{align*}

\par By moving the two 3's around in the first word, we see that the following all have colour $a$:
\begin{center}$\begin{array}{c}
3211231212\dots\\
2311321212\dots\\
2133121212\dots
\end{array}$
\end{center}
\par Doing the same thing for the second word, we see that the following also have colour $a$:
\begin{center}$\begin{array}{c}
3122131212\dots\\
1322311212\dots\\
1233211212\dots
\end{array}$
\end{center}
And so the first 6 coordinates, under the pattern $ABCCBA$, give a monochromatic copy of $123$.
\end{proof}

We remark that the above proof goes through in the exact
same way more generally, for templates of the form
$12\underbrace{33333}_{q}$, again yielding block size 2.
Of course, by Theorem \ref{unbounded} it cannot go through
in the same way for templates of the form
$1\underbrace{22222}_{p}\underbrace{33333}_{q}$. However, it
is possible that for each such template there could be a fixed
block size that one may always take for that template
(for any number of colours). 

An alternative viewpoint for the second half of the above argument (after the positions of the 3's do not matter) is to identify a word with a vector over $\mathbb{Z}$ as follows. We fix a `reference' word $w$ with $t$ 1's. Then, for any other word $v$ with $t$ 1's, we identify $v$ with the point in $\mathbb{Z}^t$ that represents how the 1's have moved between $w$ and $v$. In other words, if $w$ has 1's in positions $a_1,\dots,a_t$ then if $v$ has 1's in positions $a_1+b_1,\dots,a_t+b_t$
then it corresponds to the vector $(b_1,\dots,b_t)$. So we may view a colouring of the words (with $t$ 1's)
as a colouring of $\mathbb {Z}^t$. (Strictly, one would need to bound the coordinates of the points in
$\mathbb {Z}^t$ that we are considering, as one would not want the $t$ 1's to overlap, for example -- we
ignore this point in the interests of clarity.)

In this language, the final step of the argument above is just the assertion that, whenever we
$k$-colour $\mathbb {Z}^t$ (for fixed $k$ and for sufficiently large $t$) there exist two vectors
of the same colour that differ by a vector of the form $e_i-e_j$.

It is interesting to compare this with the approach taken in \cite{LRW}, where no reversals appear.
There the idea is as follows. If we
used pattern $ABCABCABC\dots ABC$ (say with each block set being of size $d$) then we would be asking
for two points $x$ and $x+v$ of the same colour, in a $k$-colouring of $\mathbb{Z}^n$ with $n$ large,
such that $v$ has $d$ 1's and all other coordinates zero. This is of course impossible, as we may
just 2-colour by asking whether or not the coordinate sum, taken modulo $2d$, lies in $[0,d-1]$. 
So one has to
consider also patterns like $AABBCCAABBCC\dots AABBCC$, where one would be asking for $v$ having
$d$ 2's and all other coordinates zero, and so on. It is this (which of course works for enough
candidate patterns, thanks to van der Waerden's theorem) that means that one of these fixed
patterns by itself cannot work.

To end this section, let us record that Theorem \ref{123} has the following direct consequence for Ramsey sets.
If $X$ is a regular hexagon, then for any $k$ the set $X^n$, shrunk only by a factor $\sqrt{2}$, is
$k$-Ramsey for $X$. And the same would hold for any plane hexagon on which $S_3$ acts transitively -- 
so any hexagon whose internal angles are all $60^{\circ}$ and whose sides alternate in length as
$a,b,a,b,a,b$. This follows from the equivalent formulation of the block sets conjecture in terms of
product sets -- see \cite{LRW}. 

\section{Discussion and open problems}

One of the most interesting questions raised by the previous section is whether or not this
phenomenon, of the block size being constant as the number of colours grows, holds for any template.

\begin{conjecture}
Let $m$ be a positive integer and let $T$ be a template over $[m]$. Then there exists $d$ such that
for any $k$ there is an $n$ such that whenever $[m]^n$ is $k$-coloured there exist a monochromatic block set of degree $d$ with template $T$. 
\label{boundedq}
\end{conjecture}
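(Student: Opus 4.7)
The plan is to attempt an inductive extension of the palindromic approach used in the proof of Theorem \ref{123}, with the induction running on either the alphabet size $m$ or the length of $T$. The base case $m=2$ is Ramsey with block size $1$, and the case of the template $123$ (together with the variants $12\underbrace{3\dots 3}_{q}$ mentioned after Theorem \ref{123}) handles the first non-trivial step. In the general inductive step we would try to reduce a template $T$ over $[m]$ to a shorter template, or to one over $[m-1]$, while controlling how the block size grows.

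First, I would try to replicate the ``positions of the highest symbol do not matter'' step: given a $k$-colouring of $[m]^n$ and a fixed number of copies of each symbol, apply a product-Ramsey argument to find a large set $S$ of coordinates in which we may freely redistribute the occurrences of symbol $m$ without changing the induced colour. This should reduce the problem to a $k'$-colouring of configurations on $[m-1]$, to which (some form of) the inductive hypothesis is applied to obtain a block set for the sub-template $T'$ consisting of $T$ with its $m$'s removed. Second, I would build a family of ``candidate'' words, indexed by a parameter $\geq k+1$, whose consecutive members differ by a local swap that corresponds to a palindromic rearrangement; then pigeonhole on colours, combined with the freedom in the positions of symbol $m$ inside $S$, should assemble the swap and the block set for $T'$ into a monochromatic block set for $T$ of palindromic pattern $A_1 A_2 \cdots A_l A_l \cdots A_2 A_1$, with blocks of size bounded by some function of $T$ alone.

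The hard part is making the block-size bound truly independent of $k$. Theorem \ref{unbounded} shows that the required size must grow with the exponents of the repeated letters in $T$, so the induction must carefully track how these exponents propagate, without the Ramsey parameters or the pigeonhole parameters sneaking back in and producing a $k$-dependence. The palindromic pattern $ABCCBA$ in Theorem \ref{123} succeeds precisely because the symmetric group $S_3$ is generated by a pair of adjacent transpositions that match the swap structure of the $z_i$; for longer templates one would presumably need a richer pattern with several ``flip axes'', chosen so that every permutation of $T$ (modulo the stabiliser arising from repeated symbols) is realised by the available combinatorial operations. For templates with many repetitions -- such as $112233$, which remains open -- this combinatorics becomes substantially more delicate, and it is quite plausible that a genuinely new ingredient, beyond the direct generalisation sketched here, is required.
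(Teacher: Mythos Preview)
The statement you are attempting is Conjecture~\ref{boundedq}, and the paper does \emph{not} prove it: it is posed in Section~4 as an open problem, with the remark that it holds for templates $12\underbrace{3\dots3}_{q}$ (via the proof of Theorem~\ref{123}) and for $1234$ (via K\v{r}\'i\v{z}), but is unknown already for $1\underbrace{2\dots2}_{p}\underbrace{3\dots3}_{q}$. So there is no proof in the paper to compare your proposal against.

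Your proposal is not a proof either, and to your credit you essentially say so in the final paragraph. But let me name the central gap explicitly. Your plan is to induct on $m$ or on $|T|$, reducing $T$ to a sub-template $T'$ by stripping the top symbol and then reassembling. For this induction even to get off the ground you would need the inductive hypothesis for $T'$; but the block sets conjecture itself (Conjecture~\ref{mainconj}), let alone its bounded-degree strengthening, is open for almost all $T'$. In particular, the very first step beyond the cases already handled---say $T=1234$ reducing to $T'=123$, or $T=112233$ reducing to $T'=1122$---already runs into trouble: for $112233$ the paper emphasises that even Conjecture~\ref{mainconj} is open, and your reassembly step would need to produce a monochromatic block set for a template with repeated symbols, which is exactly the obstacle the paper identifies as the ``crucial'' one. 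The palindromic trick in Theorem~\ref{123} works because the six permutations of $123$ are generated by the single adjacent swap realised by the $z_i$; once symbols repeat, no analogous small family of local swaps is known to suffice, and Theorem~\ref{unbounded} shows that whatever pattern one uses, the block size must genuinely grow with the template. So the inductive scheme you sketch is, at present, a restatement of the problem rather than a route to its solution.
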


Note that this would actually imply that one can ask for the block size \textit{and} the pattern of the
blocks to be fixed in advance. This is because the number of patterns for a given block size is finite,
so if no pattern could be guaranteed (for block size $d$) then a product colouring would yield no
block sets of size $d$ at all.

As mentioned above, the proof of Theorem \ref{123} goes through in the 
same way for the template $12\underbrace{33333}_{q}$, so that
Conjecture \ref{boundedq} holds for these templates. The
templates $1\underbrace{22222}_{p}\underbrace{33333}_{q}$ are the first examples of templates for which we know that the block sets conjecture does hold and yet we do not know if
Conjecture \ref{boundedq} holds.

Of course, the most important problem on the block sets conjecture is
the full conjecture itself. The simplest unknown case, in the sense of being on
a small alphabet, is the template
$T=112233$. This seems to be the key `next step', with the 
repetition of all the symbols being the crucial obstacle.

Digressing for a moment, we mention again that, for larger alphabets, the
template $1234$ is known to satisfy the block sets conjecture, which may be read out of the result
of K\v{r}\'i\v{z} \cite{K}. Indeed, K\v{r}\'i\v{z} showed that any set that has a group of symmetries that is soluble and acts transitively is Ramsey, and this, translated into block sets language, gives that
the template $1234$ satisfies the block sets conjecture because the group
$S_4$ is soluble. However, the template $12345$ is open -- the fact that the group
$S_5$ is not soluble means that K\v{r}\'i\v{z}'s result does not apply.  (In fact, K\v{r}\'i\v{z}'s proof of his result actually always gives that the block sizes may be taken to be fixed -- so that for example the template $1234$ does
satisfy Conjecture \ref{boundedq}. This  
is perhaps more evidence towards Conjecture \ref{boundedq}.)

Returning to the template $112233$, and continuing the line of thought as given in the 
explanation after the proof 
of Theorem \ref{123}, let us start by considering patterns, say with all blocks of size $d$, of the form
$ABCDEFABCDEF \dots$ -- this is not palindromic, but let us ignore that for the moment. When 
one `moves' the 1's from say $A$ and $B$ to $B$ and $C$ (with 2's now moving to $A$), and then to
$C$ and $D$ (with 2's now in places $A$ and $B$), 
then the positions of the 1's change by 1 in the first case (as explained above), and by 2 in
the second case. If we used some palindromic version, so with repeats of $FEDCBA$ as well, then that
would merely change some of these $+1$s and $+2$ into $-1$s and $-2$s.

So we would be asking the following question. Given $k$, must there exist $d$ such that whenever we $k$-colour $\mathbb{Z}^n$, for $n$ large,  
we always find monochromatic $x,x+v,x+2v$ where $v$ has $d$ 1's and $d$ $-1$s and the rest of the
coordinates are zero? Sadly this is not true, for example because this $v$ has fixed $2$-norm, so that
we would be asking for a certain non-spherical set (an arithmetic progression of length 3) to be
Ramsey. 

Now, if instead the pattern were a concatenation of words of the form $AABBCCDDEEFF$ (perhaps made
palindromic), then these moves would be by $+2$ and $+4$ respectively (with $-2$ and $-4$ when we use the
reversed word). And similarly for repeats of $AAABBBCCCDDDEEEFFF$ and so on. Hence, if we asserted that our 
pattern consisted of a fixed number of $ABCDEF$ words, a fixed number of $AABBCCDDEEFF$ words, and so
on, then the vector representing the positions of the 1's in the $12$-word would take values like
$x,x+v,x+2v$, where $v$ would have a fixed form consisting of $2t$ values, of which a given number would
be $\pm 1$s, a given number $\pm 2$s, and so on. It turns out, however, that some similar behaviour
is exhibited by those patterns where the $l_1$ norm of this $v$ is the same -- for example, the 1-norm seems also to come in when we move the 1's to other places in the template. So it actually makes
sense to ask for $v$ to have not just fixed support size (of $2d$) but also fixed $1$-norm. This
means that if we have $\lambda_1$ parts of the pattern of the form $ABCDEF$, $\lambda_2$ parts of the
form $AABBCCDDEEFF$, and so on, the we would be asking for a $v$ of support size $2d$ whose support is
made up of $\lambda_1$ values of $1$ and the same of $-1$, and $\lambda_2$ values of $2$ and the same of
$-2$, and so one, where the sum of all the $i\cdot\lambda_i$ equals $d$.

Now, of course this is a very special form for a vector of 1-norm $2d$. Nevertheless, we feel
that asking the question for a general vector is a sensible one, and perhaps is relevant for
making further progress. This is perhaps rather speculative, but it also seems just too nice not to be asked.

\begin{question}
Given $k$, do there exist $d$ and $n$ such that whenever $\mathbb{Z}^n$ is $k$-coloured there are
$x$ and $v$ with $\|v\|_1 = d$ such that all of $x-v,x,x+v$ have the same colour?
\label{l1q}
\end{question}

One important point is that there is a linear, not just a metric, constraint here: we are not just
asking for 3 points with pairwise $l_1$ distances $d,d,2d$. In fact, the latter is quite easy to
achieve by a direct argument. Of course, in $l_2$ these pairwise distances would imply collinearity, but
not in $l_1$. 

Now, for the 2-norm in place of the 1-norm Question \ref{l1q} would be false, as the original argument of \cite{EGMRSS} shows - since an arithmetic progression of length 3 is not spherical. In the other direction, for the $l_\infty$ norm the
result is true. As noted by Kupavskii and Sagdeev \cite{KS}, this may be read out of the
Hales-Jewett theorem instantly. Indeed, any monochromatic line in
$[3]^n$ corresponds to an arithmetic progression with common difference having $l_\infty$ norm 1. (See that paper, and also Frankl, Kupavskii and Sagdeev \cite{FKS}, for several strong bounds on the actual dimensions needed.) There is also some interesting related work on all norms $l_p$ except
for $p=1$ -- see Cook, Magyar and Pramanik\cite{CMP} --  but the problem above remains
tantalisingly open. 

The above should just be the start, and there is a vast generalisation that one could also ask for. If the answer to Question \ref{l1q} is
in the affirmative, then actually one would want longer arithmetic progressions, and indeed one would 
want larger parts of 
$l_1$ balls -- where the set $x-v,x,x+v$ in Question \ref{l1q} is viewed as just one very small part of an
$l_1$ ball of radius $d$ (centred at $x$). More usefully, one could also view this set (informally) as 
being the one-dimensional 
$l_1$ ball of radius 1 `generated' by the single vector $v$ (again centred at $x$). Indeed, some version of (some part of) this is needed even for the
template $112233$, since in the above we only considered the 1's being in positions $A$ and $B$ or $B$ and $C$ or $C$ and $D$ -- it turns out that looking at other places does correspond to taking other vectors
than $x,x+v,x+2v$ and indeed these vectors do turn out to all lie on a constant radius $l_1$ sphere about some point. So again answering the question below may well prove relevant to progress on the
block sets conjecture for the template $112233$ and beyond.

In general, we define the \textit{$l_1$ ball of radius $r$ generated by $u_1,\dots,u_t$}, where
$u_1,\dots u_t$ are disjointly-supported points in $\mathbb{Z}^n$, to consist of all sums $\sum \lambda_i u_i$, where
the $\lambda_i$ are integers satisfying 
$\sum |\lambda_i| \leq r$. 

\begin{question}
Given $r$ and $t$ and $k$, do there exist $d$ and $n$ such that whenever $\mathbb{Z}^n$ is $k$-coloured there 
are disjointly-supported vectors $u_1,\dots,u_t$, each of 1-norm $d$, such that some translate of the 
$l_1$ ball of radius $r$ generated by $u_1,\dots,u_t$ is monochromatic?
\label{generalq}
\end{question}

As explained above, the case $r=t=1$ of this is precisely Question \ref{l1q}. In fact, we suspect that the hard part is Question \ref{l1q}: if it is true then
perhaps Question \ref{generalq} ought to be true as well. Note that, because any finite subset of 
$\mathbb{Z}^n$ is contained in an $l_1$ ball, 
Question \ref{generalq} has the attractive Ramsey flavour that the objects we are colouring and the
objects that we seek a monochromatic copy of both have the same form. 

Finally, we mention that, beyond the block sets conjecture itself, a key
question is which (if either) of the two rival conjectures about Euclidean
Ramsey sets is correct. There are explicit sets known that are spherical but do not embed into any transitive set: one example is a cyclic kite,
namely the four points $(\pm 1, 0)$ and $(a, \pm \sqrt{1-a^2})$, with $a$ being transcendental (see \cite{LRW2}). Determining whether or not this quadrilateral is Ramsey would immediately
rule out one of the two conjectures.

\bibliographystyle{amsplain}
\bibliography{document}

\providecommand{\bysame}{\leavevmode\hbox to3em{\hrulefill}\thinspace}
\providecommand{\MR}{\relax\ifhmode\unskip\space\fi MR }
\providecommand{\MRhref}[2]{%
  \href{http://www.ams.org/mathscinet-getitem?mr=#1}{#2}
}
\providecommand{\href}[2]{#2}
\begin{thebibliography}{1}

\bibitem{CMP}
B.~Cook, \'A. Magyar, and M.~Pramanik, \emph{A {R}oth-type theorem for dense subsets of $\mathbb{R}^d$}, Bulletin of the London Mathematical Society \textbf{49} (2017), 676 -- 689.

\bibitem{EGMRSS}
P.~Erd\H{o}s, R.~L. Graham, P.~Montgomery, B.L. Rothschild, J.~Spencer, and E.~G. Straus, \emph{Euclidean {R}amsey {T}heorems {I}}, Journal of Combinatorial Theory, Series A \textbf{14} (1973), 341 -- 363.

\bibitem{FKS}
N.~Frankl, A.~Kupavskii, and A.~Sagdeev, \emph{Max-norm {R}amsey theory}, European Journal of Combinatorics \textbf{118} (2024), 103918.

\bibitem{FR}
P.~Frankl and V.~R\"odl, \emph{All triangles are {R}amsey}, Transactions of the American Mathematical Society \textbf{297} (1986), 777 -- 779.

\bibitem{FR2}
\bysame, \emph{A partition property of simplices in {E}uclidean spaces}, Journal of the American Mathematical Society \textbf{136} (1990), 119 -- 127.

\bibitem{KS}
A.~Kupavskii and A.~Sagdeev, \emph{All finite sets are {R}amsey in the maximum norm}, Forum of Mathematics, Sigma \textbf{9} (2021), 1 -- 12.

\bibitem{K}
I.~K\v{r}\'i\v{z}, \emph{Permutation groups in {E}uclidean {R}amsey {T}heory}, Proceedings of the American Mathematical Society \textbf{112} (1991), 899 -- 907.

\bibitem{LRW}
I.~Leader, P.~A. Russell, and M.~Walters, \emph{Transitive sets in {E}uclidean {R}amsey theory}, Journal of Combinatorial Theory, Series A \textbf{119} (2012), 382 -- 396.

\bibitem{LRW2}
\bysame, \emph{Transitive sets and cyclic quadrilaterals}, Journal of Combinatorics \textbf{2} (2017), 459 -- 462.

\end{thebibliography}
\Addresses
\end{document}